
\documentclass[11pt]{amsart}

\textwidth 140mm \textheight 215mm  \hoffset=-0.8cm
\baselineskip=1.2\baselineskip
\parskip=0.25\baselineskip

\newtheorem{theorem}{Theorem}[section]
\newtheorem{lemma}[theorem]{Lemma}
\newtheorem{proposition}[theorem]{Proposition}

\theoremstyle{definition}
\newtheorem{definition}[theorem]{Definition}
\newtheorem{example}[theorem]{Example}

\theoremstyle{remark}
\newtheorem{remark}[theorem]{Remark}

\numberwithin{equation}{section}

\begin{document}

\title[Almost limited sets in Banach lattices]
 {Almost limited sets in Banach lattices}
\author[J.X. Chen]
{Jin Xi Chen }

\address{College of Mathematics and Information Science, Shaanxi Normal
University, Xi'an 710062, P.R. China}
\address{Department of Mathematics, Southwest Jiaotong
University, Chengdu 610031, P.R. China}
 \email{jinxichen@home.swjtu.edu.cn}

\author[Z.L. Chen]
{Zi Li Chen}
\address{Department of Mathematics, Southwest Jiaotong
University, Chengdu 610031, P.R. China}
\email{zlchen@home.swjtu.edu.cn}

\author[G.X. Ji]
{Guo Xing Ji}
\address{College of Mathematics and Information Science, Shaanxi Normal
University, Xi'an 710062, P.R. China}
\email{gxji@snnu.edu.cn}

\thanks{The first author was supported in part by NSFC (No.11301285) and the Fundamental Research Funds for the Central Universities (SWJTU11CX154). The second author was supported in part by the Fundamental Research Funds for the Central Universities (SWJTU12ZT13). The third author was support in part by NSFC (No.11371233).}

\subjclass[2000]{Primary 46B42; Secondary 46B50, 47B65}

\keywords{almost limited set, the wDP$^{*}$ property, almost Dunford-Pettis operator, positive Schur property, Banach lattice}

\begin{abstract}
We introduce and study the class of almost limited sets in Banach lattices, that is, sets on which every disjoint weak$^{*}$ null sequence of functionals converges uniformly to zero. It is established that a Banach lattice has order continuous norm if and only if almost limited sets and $L\,$-weakly compact sets coincide. In particular, in terms of almost Dunford-Pettis operators into $c_{0}$, we give an operator characterization of those $\sigma$- Dedekind complete Banach lattices whose relatively weakly compact sets are almost limited, that is, for a $\sigma$-Dedekind Banach lattice $E$,  every relatively weakly compact set in $E$ is  almost limited  if and only if every continuous linear operator $T:E\rightarrow c_{\,0}$ is an almost Dunford-Pettis operator.
\end{abstract}

\maketitle \baselineskip 4.95mm

\section{Introduction}
\par Throughout this paper $X,\,Y$ will denote real Banach spaces, and $E,\,F$ will denote real Banach lattices. $B_{X}:=$ the closed unit ball of $X$. $sol(A)$ denotes the solid hull of a subset $A$ of a Banach lattice. The positive cone of $E$ will be denoted by $E^{\,+}$.
\par Let us recall that a bounded subset $A$ of $X$ is called a \textit{Dunford-Pettis set} (resp. a \textit{limited set}) in $X$ if each weakly null sequence in $X^{*}$ (resp. weak$^{*}$ null sequence in $X^{*}$) converges uniformly to zero  on $A$. Clearly, every limited set in $X$ is a Dunford-Pettis set, but the converse is not true in general.  We say that $X$ has the \textit{Dunford-Pettis property }whenever $x_{n}\xrightarrow {w} 0$ in $X$ and $f_{n}\xrightarrow {w} 0$ in $X^{*}$ imply $\lim_{\,n} f_{n}(x_{n})=0$, equivalently, every relatively weakly compact set in $X$ is a Dunford-Pettis set, alternatively, every weakly compact operator $T:X\rightarrow c_{0}$ is a Dunford-Pettis operator. If all limited sets in $X$ are relatively compact, then $X$ is said to be a \textit{Gelfand-Phillips space}. It is well-known that all separable Banach spaces and all weakly compactly generated spaces are Gelfand-Phillips spaces. Note that a $\sigma$-Dedekind complete Banach lattice $E$ is a Gelfand-Phillips space if and only if the norm of $E$ is order continuous (cf. \cite{B}).  $X$ has the \textit{Dunford-Pettis$^{*}$ property} (the DP$^{*}$ property for short\,) whenever every relatively weakly compact set in $X$ is limited, in other words, for any weakly null sequence $(x_{n})$ in $X$ and any weak$^{*}$ null sequence $(f_{n})$ in $X^{*}$, $\lim_{\,n} f_{n}(x_{n})=0$.  The DP$^{*}$ property, introduced first by Borwein, Fabian and Vanderwerff \cite{BFV}, is stronger than the Dunford-Pettis property. Carri\'{o}n, Galindo and Louren\c{c}o \cite{CGL} showed that $X$ has the DP$^{*}$ property if, and only if, every continuous linear operator $T:X\rightarrow c_{0}$ is a Dunford-Pettis operator.

\par Recall that a Banach lattice $E$ has the \textit{positive Schur property} (i.e., \textit{weak Schur property}) if every weakly null sequence with positive terms is norm null, equivalently, every disjoint weakly null sequence in $E$ is norm null. A continuous operator $T$ from $E$ into a Banach space is called \textit{almost Dunford-Pettis} (\cite{Sa}) if $\|T(x_{n})\|\rightarrow 0$ for every disjoint, weakly null sequence $(x_{n})$ in $E$. We say that $E$ has the \textit{weak Dunford-Pettis property} (wDP property for short) if every weakly compact operator from $E$ into any Banach space $Y$ is almost Dunford-Pettis (cf. \cite{Le}). It is obvious that the Dunford-Pettis property or the positive Schur property imply the weak Dunford-Pettis property. As Wnuk pointed out in \cite{W4}, $E$ has the weak Dunford-Pettis property if and only if every weakly compact operator from $E$ into $c_{0}$ is almost Dunford-Pettis, equivalently, for every disjoint weakly null sequence $(x_{n})$ in $X$ and every weakly null sequence $(f_{n})$ in $X^{*}$, $\lim_{\,n} f_{n}(x_{n})=0$.

\par Recently, Bouras \cite{Kh} considered the disjoint version of Dunford-Pettis sets and introduced the class of almost Dunford-Pettis sets in Banach lattices.      Following Bouras, a bounded subset $A$ of a Banach lattice $E$ is said to be an \textit{almost Dunford-Pettis set} if every disjoint weakly null sequence $(f_{n})$ of $ E^{\,*}$ converges uniformly to zero on $A$. He showed that a Banach lattice $E$ has the weak Dunford-Pettis property if and only if every relatively weakly compact set in $E$ is almost Dunford-Pettis (\cite{Kh}).

\par Inspired by Carri\'{o}n, Galindo and Louren\c{c}o \cite{CGL}, we may ask under what conditions every continuous operator from a Banach lattice $E$ into $c_0$  is almost Dunford-Pettis.  In this paper, using disjoint sequence techniques we consider the disjoint version of limited sets, i.e., the almost limited sets in Banach lattices (\,Definition \ref {Definition 1}).  We introduce the weak Dunford-Pettis$^{*}$ property (wDP$^{*}$ property for short) which is shared by those Banach lattices whose relatively weakly compact subsets are almost limited. In terms of almost Dunford-Pettis operators into $c_{0}$, we also give an operator characterization of Banach lattices with the wDP$^{*}$ property, that is, a $\sigma$-Dedekind Banach lattice $E$ has the wDP$^{\,*}$ property if and only if every continuous operator $T:E\rightarrow c_{\,0}$ is an almost Dunford-Pettis operator. (Theorem \ref{Theorem 2.2}).

\par Our notions are standard. The reader should see \cite{W1, W2, W4} for the (positive) Schur property and the (weak) Dunford-Pettis property of Banach lattices. For the theory of Banach lattices and operators, we refer  the reader to the monographs \cite{AB, M}.

\section{Almost Limited Sets in Banach Lattices}

\par It should be noted that in a Banach lattice (or in its dual) the lattice operations fail to be weakly (\,resp. weak$^{*}$) sequentially continuous  in general. Let us recall that every disjoint sequence in the solid hull of a relatively weakly compact subset of a Banach lattice $E$ converges weakly to zero (\cite[Theorem 13.3]{AB}). Therefore, if $(x_{n})$ is a disjoint, weakly convergent sequence in $E$, then naturally the sequences $(x_{n})$, $(|\,x_{n}|)$, $(x_{n}^{+})$, $(x_{n}^{-})$ all converge weakly to zero. However, as we shall see from the following example, $w^{*}$-convergent disjoint sequences in the dual can not be that congenial.

\begin{example}\label{Example 1}
\begin{enumerate}
  \item Let $(\delta_{\frac{1}{n}})$ be a sequence of evaluation functionals on $C\,[0,\,1]$. Clearly, $(\delta_{\frac{1}{n}})$ is a disjoint sequence and $\delta_{\frac{1}{n}}\xrightarrow {w^{*}}\delta_{0}\neq 0$ in $C\,[0,\,1]^{\,*}$.
  \item Let $f_{n}\in c^{\,*}=\ell^{1}$ $(n=1,2,3,\cdot\cdot\cdot)$ be defined as follows:
  $f_{1}=(0,1,-1,0,\cdot\cdot\cdot)$, $f_{2}=(0,0,0,1,-1,0\cdot\cdot\cdot)$,\quad $\cdot\cdot\cdot,\quad f_{n}=(0,\cdot\cdot\cdot,0,1_{(2n)},-1_{(2n+1)},0,\cdot\cdot\cdot).$
  Then $(f_{n})$ is a disjoint, weak$^{*}$ null sequence in $c^{\,*}$, but $(|\,f_{n}|)$ does not weak$^{*}$ converge to zero. Indeed, $|\,f_{n}|(\textbf{1})=\sup_{x\in [\textbf{-1},\,\textbf{1}]}|\,f_n(x)|=\sup_{x\in B_c}|\,f_n(x)|=\|\,f_n\|=2$, where $\textbf{1}:=(1,1,1,\cdot\cdot\cdot)\in c$.
\end{enumerate}
\end{example}
For a $\sigma$-Dedekind complete Banach lattice, the situation is quite different. More precisely, we have the following lemma.

\begin{lemma}\label{Lemma 1}
Let $E$ be a $\sigma$-Dedekind complete Banach lattice, and let $(f_{n})$ be a $w^{\,*}$-convergent sequence of $E^{\,*}$. If $(g_n)$ is  a disjoint sequence of $E^{\,*}$ satisfying $|\,g_n|\leq |\,f_n|$ for each $n\in \mathbb{N}$, then the sequences $(g_{n})$, $(|\,g_{n}|)$, $(g_{n}^{\,+})$, $(g_{n}^{\,-})$ all weak$^{\,*}$ converge to zero. In particular, if $(f_n)$ is a disjoint $w^*$-convergent sequence in its own right, then the sequences $(f_{n})$, $(|\,f_{n}|)$, $(f_{n}^{\,+})$, $(f_{n}^{\,-})$ are all weak$^*$ null.
\end{lemma}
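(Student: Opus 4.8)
The plan is to reduce the statement to a claim about disjoint sequences of \emph{positive} functionals, peel off an order‑bounded part that can be disposed of by an elementary summation, and treat what remains by a gliding‑hump argument in which the $\sigma$‑Dedekind completeness of $E$ is used to manufacture a single vector of $E$ that forces a contradiction.

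First I would reduce. The sequences $(g_n^{+})$, $(g_n^{-})$ are again disjoint sequences of positive functionals with $g_n^{\pm}\le |g_n|\le |f_n|$, and $|g_n|=g_n^{+}+g_n^{-}$, so it suffices to prove: \emph{if $(h_n)$ is a disjoint sequence in $(E^{*})^{+}$ with $h_n\le |f_n|$ for every $n$, then $h_n\xrightarrow{w^{*}}0$}; all four assertions, and the final ``in particular'' clause (take $g_n=f_n$), then follow at once, and since the $h_n$ are positive it is enough to test $h_n\xrightarrow{w^{*}}0$ against $x\in E^{+}$. Put $f:=w^{*}\text{-}\lim f_n$; by the uniform boundedness principle $(f_n)$, hence $(h_n)$, is norm bounded. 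From $0\le h_n\le |f_n|\le |f_n-f|+|f|$ and the Riesz decomposition property, $h_n=a_n+b_n$ with $0\le a_n\le |f_n-f|=:|u_n|$ and $0\le b_n\le |f|$, both $(a_n)$ and $(b_n)$ disjoint (being $\le h_n$), and $u_n\xrightarrow{w^{*}}0$ norm boundedly. For the $(b_n)$‑part no completeness is needed: for $x\in E^{+}$ the $b_n$ are positive and disjoint, so $\sum_{n=1}^{N}b_n(x)=\bigl(\bigvee_{n\le N}b_n\bigr)(x)\le |f|(x)$ for all $N$, whence $\sum_n b_n(x)<\infty$ and $b_n(x)\to 0$; thus $b_n\xrightarrow{w^{*}}0$. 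Everything therefore reduces to the claim: \emph{if $(a_n)\subseteq (E^{*})^{+}$ is disjoint, $a_n\le |u_n|$ and $u_n\xrightarrow{w^{*}}0$ with $\|u_n\|\le M$, then $a_n\xrightarrow{w^{*}}0$.}

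For this, fix $x_0\in E^{+}$ and suppose, toward a contradiction, that $a_n(x_0)\ge\varepsilon>0$ along a subsequence (relabelled as the whole sequence). I would pass to the principal ideal $E_{x_0}$: with its order‑unit norm this is an AM‑space with unit, hence lattice isometric to some $C(K)$, and $K$ is quasi‑Stonean (so totally disconnected, and the closure of a countable union of clopen sets is clopen), because $E_{x_0}$, being an ideal of the $\sigma$‑Dedekind complete $E$, is itself $\sigma$‑Dedekind complete. Restriction to $E_{x_0}$ is a lattice homomorphism $E^{*}\to (E_{x_0})^{*}=M(K)$ under which $u_n\mapsto\mu_n$ with $\mu_n$ $w^{*}$‑null and $(\mu_n)$ norm bounded, and $a_n\mapsto\nu_n$ with $\nu_n$ positive, $\nu_n(K)=a_n(x_0)\ge\varepsilon$, the $\nu_n$ pairwise mutually singular and $\nu_n\le |\mu_n|$. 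Using mutual singularity of the $\nu_n$, regularity, total disconnectedness, and Rosenthal's lemma, I would extract, after a further subsequence, pairwise disjoint clopen sets $U_n$ with $\mu_n(U_n)\ge\eta>0$ for a fixed $\eta$ and $|\mu_n|\bigl(\overline{\bigcup_{m\neq n}U_m}\bigr)<\eta/2$. Since $K$ is quasi‑Stonean, $e:=\sup_n\mathbf 1_{U_n}=\mathbf 1_W$ with $W:=\overline{\bigcup_n U_n}$ clopen, so $e$ is a genuine vector of $C(K)=E_{x_0}\subseteq E$ — \emph{this is exactly where $\sigma$‑Dedekind completeness is indispensable}, since the formal series $\sum_n\mathbf 1_{U_n}$ need not converge in norm. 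As $U_n$ is clopen, $W=U_n\sqcup\overline{\bigcup_{m\neq n}U_m}$, hence $u_n(e)=\mu_n(W)=\mu_n(U_n)+\mu_n\bigl(\overline{\bigcup_{m\neq n}U_m}\bigr)\ge\eta/2$ for all $n$; but $e$ is a fixed vector and $u_n\xrightarrow{w^{*}}0$, so $u_n(e)\to 0$ — a contradiction. Hence $a_n(x_0)\to 0$, as required.

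The step I expect to be the main obstacle is precisely this last gliding hump, i.e.\ passing from ``the $\nu_n$ are mutually singular and dominated by the $w^{*}$‑null $(u_n)$'' to an honest contradiction: it requires simultaneously separating the supports of the $\nu_n$ by clopen sets, taming the tails with Rosenthal's lemma (with some care about the boundary $\overline{\bigcup U_m}\setminus\bigcup U_m$, which a regular measure can otherwise charge), and knowing that the resulting supremum lands in $E$; the last point is the role of $\sigma$‑Dedekind completeness and the reason the non‑complete lattices $C[0,1]$ and $c$ of Example \ref{Example 1} behave differently. (Alternatively, the gliding hump can be run directly inside $E$, choosing disjoint $y_n\in[0,x_0]$ on which $a_n$ is large and taking $y:=\sup_n y_n\in E$, at the cost of the same tail bookkeeping.)
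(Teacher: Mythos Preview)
Your argument is sound in outline but takes a genuinely different, and much longer, route than the paper. The paper's proof is three lines: for fixed $x\in E^{+}$ and $\varepsilon>0$ it invokes a theorem of Burkinshaw (\cite{Burk}; cf.\ \cite[Theorem~13.11]{AB}), which says that for any $w^{*}$-convergent sequence $(f_n)$ in the dual of a $\sigma$-Dedekind complete Banach lattice there is a single $0\le f\in E^{*}$, lying in the ideal generated by the $f_n$, with $(|f_n|-f)^{+}(x)<\varepsilon$ for every $n$; then $|g_n|(x)\le(|f_n|-f)^{+}(x)+(|g_n|\wedge f)(x)$, and the order-bounded disjoint sequence $(|g_n|\wedge f)$ is weakly null, so $\limsup_n|g_n|(x)\le\varepsilon$. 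Your Riesz decomposition $h_n=a_n+b_n$ and the summation dispatching the order-bounded piece $(b_n)$ mirror this splitting exactly; the whole difference lies in the remaining piece. The paper simply quotes Burkinshaw to make $(|f_n|-f)^{+}(x)$ uniformly small, whereas you set $f=w^{*}\text{-}\lim f_n$ and try to prove directly that $a_n\le|u_n|$, $u_n\xrightarrow{w^{*}}0$, $(a_n)$ disjoint $\Rightarrow a_n\xrightarrow{w^{*}}0$ via the $C(K)$ model and a gliding hump --- which is, in effect, reproving Burkinshaw's theorem.

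Two steps in your sketch need more than you give them. First, from $\nu_n\le|\mu_n|$ and $\nu_n(K)\ge\varepsilon$ you only obtain $|\mu_n|$ large on a clopen set; your assertion ``$\mu_n(U_n)\ge\eta$'' for the \emph{signed} $\mu_n$ requires an additional Hahn-decomposition-plus-regularity step to pass to a clopen subset carrying a definite sign. Second --- and you flag this yourself --- Rosenthal's lemma controls $|\mu_n|\bigl(\bigcup_{m\ne n}U_m\bigr)$, not $|\mu_n|\bigl(\overline{\bigcup_{m\ne n}U_m}\bigr)$; the boundary $W\setminus\bigcup_m U_m$ can genuinely carry $|\mu_n|$-mass, so the inequality $u_n(e)\ge\eta/2$ does not yet follow, and the restrictions $\mu_n|_{\mathcal C}$ to the clopen algebra are only finitely additive, so Rosenthal on $\mathcal C$ does not automatically upgrade the union to the supremum. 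Both issues are surmountable with further bookkeeping, but carrying this out amounts to redoing the work hidden in \cite[Theorem~13.11]{AB}. What your approach buys is self-containment; what the paper's buys is brevity.
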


\begin{proof}
 Let $x\in E^{\,+}$, and let $\varepsilon>0$. Since $E$ is $\sigma$-Dedekind complete and $(f_{n})$ is a $w^{\,*}$-convergent sequence of $E^{\,*}$, there exists $0\leq f \in E^{\,*}$ lying in the ideal generated by $(f_{n})$ in $E$ such that $$(|\,f_n|-f)^{+}(x)<\varepsilon$$holds for all $n\in\mathbb{N}$ (\,\cite{Burk}; cf. \cite[Theorem 13.11]{AB}). Therefore, we have
 \begin{eqnarray*}
|\,g_{n}(x)|\leq |\,g_{n}|(x)&=&(|\,g_n|-f)^{+}(x)+(|\,g_n|\wedge f)(x)\\&\leq & (|\,f_n|-f)^{+}(x)+(|\,g_n|\wedge f)(x)\\&<&\varepsilon+(|\,g_n|\wedge f)(x).
 \end{eqnarray*}Because $(|\,g_n|\wedge f)$ is an order bounded disjoint sequence, we have $|\,g_n|\wedge f\xrightarrow {w}0$, and hence $\limsup|\,g_{n}|(x)\leq\varepsilon$. This implies $g_n\xrightarrow {w^{\,*}}0$ and $|\,g_n|\xrightarrow {w^{\,*}}0$. Finally, the inequalities $g_{n}^{\,+}\leq|\,g_n|$ and $g_{n}^{\,-}\leq|\,g_n|$ finish the proof.
\end{proof}
Next we give the definition of an almost limited set in a Banach lattice, which is the disjoint version of the limited set, and is in a sense also the $w^*$\,- counterpart of the almost Dunford-Pettis set.

\begin{definition}\label{Definition 1}
A norm bounded subset $A$ of $E$ is said to be an \textit{almost limited set} if every disjoint, weak$^{*}$ null sequence $(f_{n})$ of $ E^{\,*}$ converges uniformly to zero on $A$, that is, $\sup_{x\in A}|\,f_{n}(x)|\rightarrow0$.
\end{definition}
Now we are in a position to give some examples of almost limited sets and distinguish the class of almost limited sets from the classes of relatively (weakly) compact sets, limited sets and (almost) Dunford-Pettis sets, etc.
\begin{remark}\label{Remark 1}

  (1) By the definition of an almost limited set, every order interval in a Banach lattice is almost limited if, and only if, $|\,f_n|\xrightarrow {w^{\,*}}0$ for each disjoint $w^*$-null sequence in $E^{\,*}$ (\,\cite[Theorem 11.11]{AB}). Then, by Lemma \ref{Lemma 1}, in a $\sigma$-Dedekind complete Banach lattice every order interval is an almost limited set. If $E$ is not $\sigma$-Dedekind complete, an order interval of $E$ is not necessarily almost limited. We can see this from Example \ref{Example 1}(2).
  \par (2) It is obvious that all relatively compact sets and all limited sets in a Banach lattice are almost limited. The converse does not hold in general. For example, by $(1)$ $B_{\ell^{\,\infty}}$ is almost limited, but $B_{\ell^{\,\infty}}$ is not either compact or limited.  We can also find a counterexample in a Banach lattice with order continuous norm. For instance, $L_1[0,\,1]$ is a Gelfand-Phillips space, but there exists an order interval in $L_1[0,\,1]$ which is not compact, since $L_1[0,\,1]$ is not a discrete space (cf. \cite[Corollary 21.13]{AB1}). On the other hand, since $L_1[0,\,1]$ has order continuous norm (and hence Dedekind complete), by $(1)$  each order interval of $L_1[0,\,1]$ is almost limited.
  \par(3) Clearly, every almost limited set is an almost Dunford-Pettis set, but the converse is not true in general. For instance, $B_{c_{0}}$ is a Dunford-Pettis set (and hence an  almost Dunford-Pettis set), but $B_{c_{0}}$ is not almost limited. It should be noted that in a Grothendieck Banach lattice the class of almost limited sets and the class of almost Dunford-Pettis sets are the same.
  \par(4) A relatively weakly compact set need not be almost limited, and vice versa. For instance, $B_{\ell^{\,2}}$ is weakly compact, but not almost limited. On the other hand, $B_{\ell^{\,\infty}}$ is not weakly compact, but by $(1)$ $B_{\ell^{\,\infty}}$ is almost limited.
  \par(5) It is well known that every limited set is conditionally weakly compact \cite{BD}, and the Josefson-Nissenzweig theorem precludes any possibility of the closed unit ball of an infinite dimensional Banach space being limited. However, $B_{\ell^{\,\infty}}$ is indeed almost limited, and by Rosenthal's $\ell^{\,1}$ theorem $B_{\ell^{\,\infty}}$ is not conditionally weakly compact.

\end{remark}

\par Let $F$ be a Banach sublattice of a Banach lattice $E$. It may happen that a subset $A$ of $F$ is almost limited in $E$, but fails to be almost limited in $F$. For example, Phillips' lemma shows that $B_{c_{0}}$ is a limited set in $\ell^{\,\infty}$, but $B_{c_{0}}$ is not  almost limited in $c_{\,0}$. It should also be noted that the solid hull of an almost limited set in a Banach lattice  is not necessarily almost limited. For instance, the singleton $\{\textbf{1}\}$ is certainly almost limited in $c$, but  $sol\{\textbf{1}\}$$\,=\,$$B_c=[-\textbf{1},\,\textbf{1}]$ is not almost limited (see Example \ref{Example 1} (2)). A further investigation will be made in Remark \ref{Remark 2} (2). The following theorem characterizes solid sets being almost limited.

\begin{theorem}\label{Theorem 1}
Let $S$ be a norm bounded solid subset of a $\sigma$-Dedekind complete Banach lattice $E$. Then the following statements are equivalent.
\par \begin{enumerate}
  \item $S$ is an almost limited set in $E$.
  \item For each disjoint sequence $(x_n)$ in $S$ and each disjoint $w^*$-null sequence $f_n$ in $E^{\,*}$, we have $\lim f_{n}(x_n)=0$.
  \item For each disjoint sequence $(x_n)$ in $S\cap E^{\,+}$ and each disjoint $w^*$-null sequence $f_n$ in $(E^{\,*})^+$, we have $\lim f_{n}(x_n)=0$.
\end{enumerate}
\end{theorem}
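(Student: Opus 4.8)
The plan is to prove the cycle $(1)\Rightarrow(2)\Rightarrow(3)\Rightarrow(1)$, the first two implications being immediate. If $S$ is almost limited and $(f_n)$ is a disjoint $w^{*}$-null sequence in $E^{*}$, then for any sequence $(x_n)$ in $S$ one has $|\,f_n(x_n)|\le\sup_{x\in S}|\,f_n(x)|\to 0$; this gives $(1)\Rightarrow(2)$, and $(3)$ is just the restriction of $(2)$ to positive disjoint sequences in $S$ and positive disjoint $w^{*}$-null sequences in $E^{*}$. So the whole content lies in $(3)\Rightarrow(1)$, which I would establish by contraposition. Suppose $S$ is not almost limited. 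Then there are a disjoint $w^{*}$-null sequence $(f_n)$ in $E^{*}$, a number $\varepsilon>0$, and (after passing to a subsequence) points $x_n\in S$ with $|\,f_n(x_n)|>\varepsilon$ for every $n$. Set $g_n:=|\,f_n|$ and $y_n:=|\,x_n|$. Because $S$ is solid we have $y_n\in S\cap E^{\,+}$, and $g_n(y_n)\ge|\,f_n(x_n)|>\varepsilon$; moreover, since $(f_n)$ is a disjoint $w^{*}$-convergent sequence, Lemma \ref{Lemma 1} shows that $(g_n)=(|\,f_n|)$ is a disjoint, $w^{*}$-null sequence in $(E^{\,*})^{+}$. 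Thus the problem is reduced to the following disjointification: find a disjoint sequence $(z_k)$ in $S\cap E^{\,+}$ and a subsequence $(g_{n_k})$ of $(g_n)$ with $\liminf_k g_{n_k}(z_k)>0$. Since $(g_{n_k})$ is again disjoint, $w^{*}$-null and positive, this contradicts $(3)$, completing the proof.

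For the disjointification I would argue in two steps. First, using that the $g_n$ are pairwise disjoint together with the Riesz--Kantorovich formula $g_n\wedge\bigl(\sum_{m<n}g_m\bigr)=0$ (cf. \cite{AB}), I would replace --- after a diagonal extraction --- each $y_n$ by some $v_n$ with $0\le v_n\le y_n$ (hence $v_n\in S\cap E^{\,+}$ by solidity), $g_n(v_n)>\varepsilon/2$, and $g_i(v_j)$ as small as desired for all $i\ne j$: the bounds with $i<j$ come from splitting $y_j$ according to the above disjointness relation, and those with $i>j$ come from the $w^{*}$-nullity of $(g_n)$ evaluated at the (already fixed) element $v_j$. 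Second, I would perturb the $v_n$ to a pairwise disjoint sequence by a gliding-hump construction: once $z_1,\dots,z_k$ have been chosen, the band generated by $w_k:=z_1\vee\cdots\vee z_k$ is a \emph{projection} band because $E$ is $\sigma$-Dedekind complete, and I would take $z_{k+1}$ to be the component of $v_{n_{k+1}}$ in the disjoint complement of that band, for a suitably large index $n_{k+1}$. Then $z_{k+1}\perp z_1,\dots,z_k$ and $0\le z_{k+1}\le v_{n_{k+1}}\le y_{n_{k+1}}$, so $z_{k+1}\in S\cap E^{\,+}$, and what has to be checked is that the discarded part $v_{n_{k+1}}-z_{k+1}$, which lies in the band of $w_k\le v_1\vee\cdots\vee v_k$, carries only a small amount of $g_{n_{k+1}}$-mass, so that $g_{n_{k+1}}(z_{k+1})$ stays bounded away from $0$. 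Here I would use the $g$-transversality arranged in Step 1, the band-projection identity $v_{n_{k+1}}-z_{k+1}=\sup_{\ell}\bigl(v_{n_{k+1}}\wedge\ell w_k\bigr)$, the fact that for a positive functional $\sup\{g_n(u):0\le u\le w_k\}=g_n(w_k)\to 0$ as $n\to\infty$ (so order intervals below $w_k$ are harmless, cf. Remark \ref{Remark 1}(1)), and the $w^{*}$-convergence $g_n(v_j)\to 0$ for each fixed $j$.

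The routine parts are the two trivial implications, the reduction to positive disjoint data (Lemma \ref{Lemma 1} together with solidity of $S$), and the bookkeeping of the diagonal extraction. The main obstacle is the estimate in the second step: bounding the $g_{n_{k+1}}$-mass of the component of $v_{n_{k+1}}$ lying in the band generated by the previously chosen $z$'s. A crude norm bound does not suffice, since that component need not be order bounded by $w_k$ (recall that $E$ is only $\sigma$-Dedekind complete, not necessarily order continuous); the point is that Step 1 must build in enough transversality between the $v_n$ and the $g_n$ that, after the band projection, the leftover mass can be made summably small, with the $w^{*}$-nullity of $(g_n)$ supplying the decay on the finitely many fixed elements that enter at each stage.
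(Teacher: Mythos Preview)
Your overall architecture matches the paper's: $(1)\Rightarrow(2)\Rightarrow(3)$ are trivial, and for $(3)\Rightarrow(1)$ one argues by contradiction, reduces to positive data via solidity and Lemma~\ref{Lemma 1}, and then disjointifies the sequence in $S\cap E^{+}$. The gap is in your Step~2. You need to bound $g_{n_{k+1}}(v_{n_{k+1}}-z_{k+1})$, where $v_{n_{k+1}}-z_{k+1}=\sup_\ell(v_{n_{k+1}}\wedge \ell w_k)$ is the band component. None of the tools you list yields this: for fixed $\ell$ one has $g_{n_{k+1}}(v_{n_{k+1}}\wedge\ell w_k)\le \ell\,g_{n_{k+1}}(w_k)$, which blows up with $\ell$; the transversality from Step~1 controls $g_{n_{k+1}}$ only on the specific elements $v_{n_1},\dots,v_{n_k}$, not on arbitrary members of the band they generate; and since $g_{n_{k+1}}$ need not be $\sigma$-order continuous (take $E=\ell^{\infty}$), even a uniform bound on the truncations $v_{n_{k+1}}\wedge\ell w_k$ would not bound their supremum. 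You correctly flag this as ``the main obstacle,'' but it is not resolved, and the band-projection route does not appear to close without an extra hypothesis such as order continuity.

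The paper avoids band projections altogether and disjointifies in one stroke via \cite[Lemma~13.4]{AB}. From $0\le x_n\in S$ with $|f_n|(x_n)>\varepsilon$ and $|f_n|\xrightarrow{w^*}0$, one extracts indices so that $|f_{n_{m+1}}|\bigl(4^{m}\sum_{k\le m}x_{n_k}\bigr)<2^{-m}$, sets $x=\sum_k 2^{-k}x_{n_k}$, and puts
\[
y_m=\Bigl(x_{n_{m+1}}-4^{m}\sum_{k\le m}x_{n_k}-2^{-m}x\Bigr)^{+}.
\]
That lemma makes $(y_m)$ disjoint, solidity gives $y_m\in S\cap E^{+}$, and the direct estimate $|f_{n_{m+1}}|(y_m)>\varepsilon-2^{-m}-2^{-m}|f_{n_{m+1}}|(x)$ yields $\liminf_m|f_{n_{m+1}}|(y_m)\ge\varepsilon$, contradicting $(3)$. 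No preliminary Step~1 and no projection-mass estimate are needed.
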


\begin{proof}
$(1)\Rightarrow(2)\Rightarrow(3)$ Obvious.
\par $(3)\Rightarrow(1)$ Let $(f_n)$ be an arbitrary disjoint $w^{\,*}$-null sequence in $E^{\,*}$. To finish the proof, we have to show that $\sup_{x\in S}|\,f_{n}(x)|\rightarrow0$. Assume by way of contradiction that $\sup_{x\in S}|\,f_{n}(x)|$ does not converge to 0 as $n\rightarrow\infty$. Then, by passing to a subsequence if necessary, we can suppose that there would exist some $\varepsilon>0$ such that $\sup_{x\in S}|\,f_{n}(x)|>\varepsilon$ for all $n\in\mathbb{ N}$. Note that the equality $\sup_{x\in S}|\,f_{n}(x)|=\sup_{0\leq x\in S}|f_{n}|(x)$ holds, since $S$ is solid. Therefore, for each $n$ choose some $0\leq x_n\in S$ satisfying $|\,f_n|(x_n)>\varepsilon$. In view of Lemma \ref{Lemma 1}, we have $|\,f_n|\xrightarrow{w^*}0$. Let $n_{1}=1$. Because $|\,f_{n}|(4x_{n_{1}})\rightarrow0$ $(n\rightarrow\infty)$, there exists some $1<n_{2}\in\mathbb{N}$ such that $|\,f_{n_{2}}|(4x_{n_{1}})<2^{-1}$. Again, since $|\,f_n|(4^2\sum_{k=1}^{\,2}x_{n_{k}})\rightarrow0$ $(n\rightarrow\infty)$, choose some $n_{3}\in\mathbb{N}$ $(n_1<n_2<n_3)$ satisfying $|\,f_{n_3}|(4^2\sum_{k=1}^{\,2}x_{n_{k}})<2^{-2}$. It is easy to see that, by induction, we can find a strictly increasing subsequence $(n_{k})_{k=1}^{\infty}$ of $\mathbb{N}$ such that $|\,f_{n_{m+1}}|(4^{m}\sum_{k=1}^{\,m}\,x_{n_{k}}\,)<2^{-m}$ for all $m\in\mathbb{N}$. Let $$x=\sum_{k=1}^{\,\infty}2^{-k}x_{n_{k}},\quad y_{m}=(x_{n_{m+1}}-4^{m}\sum_{k=1}^{\,m}\,x_{n_{k}}-2^{-m}x)^{+}.$$Then, in view of \cite[Lemma 13.4]{AB} $(y_{m})$ is a disjoint sequence, and $(y_{m})\subset S\cap E^{\,+}$ because $0\leq y_{m}\leq x_{n_{m+1}}\in S$ and $S$ is solid. Now, we have
\begin{eqnarray*}
    |f_{n_{m+1}}|(y_m)&=&|f_{n_{m+1}}|\left(x_{n_{m+1}}-4^{m}\sum_{k=1}^{\,m}\,x_{n_{k}}-2^{-m}x\right)^{+}\\&\geq& |f_{n_{m+1}}|\left(x_{n_{m+1}}-4^{m}\sum_{k=1}^{\,m}\,x_{n_{k}}-2^{-m}x\right)
    \\&=&|f_{n_{m+1}}|(x_{n_{m+1}})-|f_{n_{m+1}}|\left(4^{m}\sum_{k=1}^{\,m}\,x_{n_{k}}\right)-2^{-m}|f_{n_{m+1}}|(x)\\&>&\varepsilon-2^{-m}-2^{-m}|f_{n_{m+1}}|(x).
\end{eqnarray*}
Note that $|\,f_{n_{m}}|\xrightarrow{w^*}0$ $(m\rightarrow\infty)$. Hence, $\liminf|f_{n_{m+1}}|(y_m)\geq\varepsilon>0$. On the other hand, since $(y_m)$ is a disjoint sequence of $S\cap E^{\,+}$ and $|f_{n_{m}}|$ is a disjoint $w^*$-null sequence in $(E^{\,*})^{+}$, by hypothesis we have $\lim_{m}|f_{n_{m+1}}|(y_m)=0$. This leads to a contradiction, and the proof is completed.
\end{proof}

\par Let us recall that a norm bounded subset $A$ of a Banach lattice $E$ is called to be \textit{$L\,$-weakly compact} if $\|x_n\|\rightarrow0$ for every disjoint sequence $(x_{n})$ contained in the solid hull of $A$ (cf. \cite[Definition 3.6.1]{M}). Every $L\,$-weakly compact set is relatively weakly compact set, but the converse does not hold in general. In an $L$-space, $L\,$-weakly compact sets and relatively weakly compact sets coincide. More generally, every relatively weakly compact subset of $E$ is $L$-weakly compact if, and only if, $E$ has the positive Schur property (\cite[Corollary 3.6.8]{M}).  As we see from Remark \ref{Remark 1} (4), an almost limited set need not be relatively weakly compact (hence not $L\,$-weakly compact) even if the Banach lattice is Dedekind complete. The following theorem deals with the relationship of $L\,$-weakly compact sets with almost limited sets.
\begin{theorem}\label{Theorem 2}
(1) Every $L$-weakly compact set in a Banach lattice $E$ is an almost limited set.
\par (2) The norm of $E$ is order continuous if, and only if,  each almost limited set in $E$ is $L$-weakly compact.

\end{theorem}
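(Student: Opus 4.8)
The plan is to prove (1) first, then handle the two implications of (2) separately. For (1), let $A$ be an $L$-weakly compact set and let $(f_n)$ be a disjoint weak$^*$-null sequence in $E^*$. I would argue by contradiction: if $\sup_{x\in A}|f_n(x)|\not\to 0$, then after passing to a subsequence there is $\varepsilon>0$ and points $x_n\in A$ with $|f_n(x_n)|>\varepsilon$. Since $A$ is $L$-weakly compact, so is $\mathrm{sol}(A)$, and in particular the sequence $(x_n)\subset A$... but $(x_n)$ need not be disjoint, so the direct route fails. Instead I would invoke the standard characterization of $L$-weakly compact sets (see \cite[Proposition 3.6.2]{M}): $A$ is $L$-weakly compact iff for every $\varepsilon>0$ there is $u\in E^+$ with $A\subseteq [-u,u]+\varepsilon B_E$. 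Writing $x_n = v_n + \varepsilon' w_n$ with $|v_n|\le u$ and $\|w_n\|\le 1$ (for $\varepsilon'$ small relative to $\varepsilon$ and $\sup_n\|f_n\|$), the term $f_n(\varepsilon' w_n)$ is uniformly small, so $|f_n(v_n)|$ stays bounded below; but $|f_n(v_n)|\le |f_n|(u)$, and since $(f_n)$ is disjoint and weak$^*$-null, $|f_n|\xrightarrow{w^*}0$ fails in general — however here we only need $|f_n|(u)\to 0$, which does hold because an order-bounded disjoint sequence tested against a fixed positive vector goes to zero (this is the same mechanism as in Lemma \ref{Lemma 1}: $(|f_n|\wedge$ anything order bounded$)$ arguments, or more directly \cite[Theorem 13.3]{AB} applied to the disjoint order-bounded sequence obtained from $(f_n)$ restricted to the band generated by a suitable functional). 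This contradiction proves (1).

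For the forward implication of (2), assume $E$ has order continuous norm. Then by \cite[Theorem 4.8, Corollary 4.9]{AB} (or the dual statement), order continuity forces $|f_n|\xrightarrow{w^*}0$ for every disjoint weak$^*$-null sequence — indeed order continuity of $E$ makes $E^*$ behave well, and in fact when $E$ has order continuous norm every order interval of $E$ is weakly compact, hence $L$-weakly compact, so the lattice operations in $E^*$ become weak$^*$-sequentially continuous on disjoint sequences. Given an almost limited set $A$, I must show it is $L$-weakly compact, i.e. $\|x_n\|\to 0$ for every disjoint $(x_n)\subset\mathrm{sol}(A)$. Suppose not; then $\|x_n\|>\varepsilon$ for disjoint $(x_n)\subset\mathrm{sol}(A)$, so choose $f_n\in E^*$ with $\|f_n\|\le 1$ and $f_n(x_n)>\varepsilon$; replacing $f_n$ by $|f_n|$ and disjointifying against the $x_n$ (using that the $x_n$ are disjoint and \cite[Lemma 13.4]{AB}-type constructions) yields a disjoint weak$^*$-null sequence $(g_n)$ in $E^*$ with $g_n(y_n)>\varepsilon/2$ for $y_n\in\mathrm{sol}(A)$ dominated by $|x_n|$; since each $y_n$ has the form $|h_n|$ with $h_n\in A$ (up to the solid hull), $\sup_{z\in A}|g_n(z)|\ge \varepsilon/2$, contradicting that $A$ is almost limited.

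For the converse of (2), suppose the norm of $E$ is not order continuous. Then $E$ contains a lattice copy of $\ell^\infty$, or more precisely (by \cite[Theorem 4.51]{AB}) there is an order bounded disjoint sequence in $E^+$ that is not norm-null; equivalently some order interval $[0,u]$ fails to be $L$-weakly compact. I would then exhibit $[0,u]$ (or a closely related set) as an almost limited set that is not $L$-weakly compact. That $[0,u]$ is not $L$-weakly compact is the failure of order continuity itself. That $[0,u]$ is almost limited would follow from Remark \ref{Remark 1}(1) in the $\sigma$-Dedekind complete case, but $E$ need not be $\sigma$-Dedekind complete here; so instead I would work inside the sublattice or use the positive disjoint sequence $(u_n)\subset[0,u]$ directly and show the set $\{u_n : n\in\mathbb N\}$, or its closed convex solid hull intersected appropriately, is almost limited — this requires checking that every disjoint weak$^*$-null $(f_n)$ has $\sup_k|f_n(u_k)|\to 0$, which should reduce to an order-boundedness argument since all $u_k\le u$. \textbf{The main obstacle} I anticipate is precisely this last point: producing, without a $\sigma$-Dedekind completeness hypothesis, a concrete almost limited set that witnesses the failure of $L$-weak compactness, since the clean statement "order intervals are almost limited" is only available under $\sigma$-Dedekind completeness (Remark \ref{Remark 1}(1)). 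The resolution is likely to pass to the band or ideal generated by $u$, which is $\sigma$-Dedekind complete as a principal ideal with its own order, or to use Fremlin's theorem that an order bounded almost limited witness can be built from the non-order-continuity data directly.
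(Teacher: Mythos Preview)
Your main obstacle in the converse of (2) is genuine, and your proposed workarounds fail: principal ideals need not be $\sigma$-Dedekind complete, and the set $\{u_n:n\in\mathbb{N}\}$ need not be almost limited (in $c$, take $u_n=e_n$ and test against the sequence $(f_k)$ from Example~\ref{Example 1}(2): then $\sup_n|f_k(e_n)|\ge 1$ for every $k$). The idea you are missing is that you are looking for too large a witness. A \emph{singleton} $\{x\}$ is always almost limited (being compact), while $\{x\}$ is $L$-weakly compact exactly when every disjoint sequence in $[-|x|,|x|]$ is norm-null. Hence, if every almost limited set is $L$-weakly compact, then in particular every singleton is, and by \cite[Proposition 3.6.2]{M} one gets $|f_n|(|x|)\to 0$ for every norm-bounded disjoint $(f_n)\subset E^{*}$ and every $x\in E$; this is order continuity by \cite[Corollary 2.4.3]{M}. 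That is precisely the paper's argument.

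Your treatments of (1) and of the forward implication of (2) are in the right spirit but contain gaps. In (1), the decomposition you quote actually requires $u\in (E^{a})^{+}$, not an arbitrary $u\in E^{+}$, and your justification for $|f_n|(u)\to 0$ (``order-bounded disjoint sequence'') is wrong as stated, since $(f_n)$ is only norm bounded; the correct reason is that $\{u\}$ is itself $L$-weakly compact when $u\in E^{a}$, which is again \cite[Proposition 3.6.2]{M} --- so one may as well apply that proposition directly to $A$ in one line, as the paper does, obtaining $\sup_{x\in A}|f_n|(|x|)\to 0$. In the forward direction of (2), working with disjoint $(x_n)\subset sol(A)$ forces you to return to $A$ at the end, and the step ``$\sup_{z\in A}|g_n(z)|\ge\varepsilon/2$'' does not follow from $g_n(y_n)>\varepsilon/2$ with $y_n\in sol(A)$. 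The paper sidesteps this by using the dual characterization from \cite[Proposition 3.6.2]{M}: to show $B$ is $L$-weakly compact it suffices that $\rho_B(f_n)\to 0$ for every norm-bounded disjoint $(f_n)\subset E^{*}$; unpacking $\rho_B(f)=\sup\{|g(x)|:|g|\le|f|,\,x\in B\}$ yields $x_n\in B$ (not $sol(B)$) and disjoint $g_n$ with $|g_n|\le|f_n|$ and $|g_n(x_n)|>\varepsilon$, and order continuity of the norm of $E$ makes the norm-bounded disjoint $(g_n)$ weak$^{*}$-null, contradicting almost limitedness of $B$ directly.
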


\begin{proof}

   (1)  Let $A$ be an $L\,$-weakly compact subset of $E$, and let $(f_{n})$ be any disjoint $w^*$-null sequence of $E^{*}$. By Proposition 3.6.2 of \cite{M} we have $\sup_{x\in A}|\,f_n|(|\,x|)\rightarrow0$. The inequality $\sup_{x\in A}|\,f_n(x)|\leq\sup_{x\in A}|\,f_n|(|\,x|)$ implies that $(f_{n})$ converges uniformly to zero on $A$, that is, $A$ is an almost limited set.

   \par (2) Assume that $E$ has order continuous norm. Let $B$ be an almost limited set in $E$. To prove that $B$ is $L\,$-weakly compact, by Proposition 3.6.2 of \cite{M} we only need to show that $\rho_{B}(f_{n})\rightarrow0$ for every norm bounded disjoint sequence $(f_n)$ of $E^{\,*}$, where $\rho_{B}(f)$ is defined by
      $$\rho_{B}(f)=\sup\{|f|(|\,x|):x\in B\}=\sup\{|g(x)|:|g|\leq|f|,\,x\in B\}$$ for every $f\in E^{\,*}$. Assume by way of contradiction that $(\rho_{B}(f_{n}))$ does not tend to 0 as $n\rightarrow\infty$ for some norm bounded disjoint sequence $(f_n)$ of $E^{\,*}$. Then, by passing to a subsequence if necessary, we can suppose that there would exist some $\varepsilon>0$ satisfying $\rho_{B}(f_n)=\sup\{|\,f_n|(|\,x|):x\in B\}>\varepsilon$ for all $n$. For each $n$ choose some $x_{n}\in B$ and some $g_{n}\in E^{\,*}$ with $|\,g_n|\leq|\,f_n|$ such that $|\,g_{n}(x_{n})|>\varepsilon$. Clearly, $(g_n)$ is likewise a norm bounded disjoint sequence. It follows from the order continuity of the norm of $E$ that $g_n\xrightarrow{w^{*}}0$ (\cite[Corollary 2.4.3]{M}). Since $B$ is almost limited, $(g_n)$ converges uniformly to 0 on $B$, which implies that $|\,g_{n}(x_{n})|\rightarrow0$. This leads to a contradiction.
\par Now assume that every almost limited set in $E$ is $L\,$-weakly compact. To establish that the norm of $E$ is order continuous, it suffices to show that every disjoint sequence $(f_n)$ from $B_{E^{\,*}}$ is $w^*$-null (\cite[Corollary 2.4.3]{M}). To this end, let $x\in E$. Clearly, the singleton $\{x\}$ is almost limited, and hence by hypothesis $\{x\}$ is $L\,$-weakly compact. By Proposition 3.6.2 of \cite{M}, we have $\rho_{x}(f_{n})=|\,f_n|(|\,x|)\rightarrow0$. The inequality $|\,f_{n}(x)|\leq|\,f_n|(|\,x|)$ finishes the proof.
\end{proof}

\begin{remark}\label{Remark 2}
(1) It should be noted that, in a $\sigma$-Dedekind complete Banach lattice $E$, every limited set is relatively compact (i.e., $E$ is a Gelfand-Phillips space)  if, and only if, the norm of $E$ is order continuous (cf.\,\cite{B}).
\par (2) From the remarks just preceding Theorem \ref{Theorem 1} we see that the solid hull of an almost limited set is not necessarily almost limited.  If $E$ has order continuous norm, then by Theorem \ref{Theorem 2} (2) the solid hull of an almost limited set in $E$ is almost limited, since the solid hull of an $L\,$-weakly set is likewise $L\,$-weakly compact. However, the converse does not hold in general. For instance, every norm bounded set in $\ell^{\,\infty}$ is almost limited, but the norm of $\ell^{\,\infty}$ is not order continuous.
\end{remark}

\par Let us recall that a norm bounded subset $B$ of $X^{*}$ is called an $L$\,-set whenever every weakly null sequence $(x_n)$ of $X$ converges uniformly to zero on the set $B$, that is, $\sup_{f\in B}|f(x_n)|\rightarrow0$ (cf. \cite{Em}). Recently, Aqzzouz and Bouras \cite{Aq} introduced the class of almost $L$\,-sets in Banach lattices. A norm bounded subset $B$ of the dual $E^{*}$ of a Banach lattice $E$ is said to be an almost $L$\,-set if every disjoint, weakly null sequence $(x_n)$ of $E$ converges uniformly to zero on $B$. In $E^{*}$  the following implications are clear:$$\textrm{almost limited set}\Longrightarrow \textrm{almost Dunford-Pettis set}\Longrightarrow \textrm{almost}\,  L\,\textrm{-set}.$$From Corollary 2.12 of \cite {Kh} and Theorem \ref{Theorem 2} it follows that if $E^{*}$ has order continuous norm, then the class of almost limited sets and the class of almost Dunford-Pettis sets coincide in $E^*$. Indeed, we can say more.

\begin{theorem}\label{Theorem 2.8}
Let $E$ be a Banach lattice. If the norm of $E^*$ is order continuous, then in $E^*$ the class of almost limited sets,  the class of almost Dunford-Pettis sets and the class of almost $L$\,-sets are the same.
\end{theorem}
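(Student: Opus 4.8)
The plan is to close a circle of implications. For an arbitrary Banach lattice the chain ``almost limited set $\Rightarrow$ almost Dunford--Pettis set $\Rightarrow$ almost $L$-set'' holds in $E^*$, and this was recorded above; hence it is enough to prove that, when the norm of $E^*$ is order continuous, every almost $L$-set of $E^*$ is almost limited. This forces the three classes to coincide.

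So I would fix an almost $L$-set $B\subseteq E^*$ and a disjoint $w^*$-null sequence $(\Phi_n)$ in the dual $E^{**}$ of the Banach lattice $E^*$, aiming for $\sup_{f\in B}|\Phi_n(f)|\to 0$. Assume not; passing to a subsequence, there are $\varepsilon>0$ and $f_n\in B$ with $|\Phi_n(f_n)|>\varepsilon$ for every $n$. The decisive step is to replace each $\Phi_n$ by an honest element of $E$ lying below $|\Phi_n|$. Here order continuity of the norm of $E^*$ is used twice: it implies, on the one hand, that $E$ is order dense in the (Dedekind complete) bidual $E^{**}$, so that $|\Phi_n|=\sup\{x\in E^{+}:x\le|\Phi_n|\}$ in $E^{**}$; and, on the other hand, that every functional of $E^*$ is order continuous when regarded on $E^{**}$. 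Combining these two facts with the inequality $|\Phi_n(f_n)|\le|\Phi_n|(|f_n|)$ and the Riesz--Kantorovich formulas for $f_n^{+}$ and $f_n^{-}$, a short computation yields $y_n\in E^{+}$ with $0\le y_n\le|\Phi_n|$ and $|f_n(y_n)|>\varepsilon/2$.

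The sequence $(y_n)$ is then disjoint, since $y_n\wedge y_m\le|\Phi_n|\wedge|\Phi_m|=0$ for $n\ne m$, and norm bounded, since $\|y_n\|\le\|\Phi_n\|$. By \cite[Corollary 2.4.3]{M} applied to the Banach lattice $E^*$, order continuity of the norm of $E^*$ forces every norm bounded disjoint sequence of $E^{**}$ to be weak$^*$ null; restricting to the sublattice $E\subseteq E^{**}$, this means $(y_n)$ is weakly null in $E$. Since $B$ is an almost $L$-set we get $\sup_{f\in B}|f(y_n)|\to 0$, contradicting $|f_n(y_n)|>\varepsilon/2$. Hence $\sup_{f\in B}|\Phi_n(f)|\to 0$, i.e.\ $B$ is almost limited.

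The genuine difficulty is the transfer from the bidual $E^{**}$ back to $E$: being an almost $L$-set only controls \emph{disjoint} sequences of $E$, so one has to manufacture such a sequence out of the disjoint sequence $(\Phi_n)$ of $E^{**}$ while keeping a uniform lower estimate against the functionals $f_n$. It is precisely order denseness of $E$ in $E^{**}$ — which is exactly where the hypothesis on $E^*$ is indispensable — that lets the $y_n$ be taken below $|\Phi_n|$ and hence remain disjoint; the remaining steps (the Riesz--Kantorovich bookkeeping and ``disjoint $+$ bounded $\Rightarrow$ weakly null'') are routine.
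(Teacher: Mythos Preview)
Your circle-closing strategy is sound, and the chain ``almost limited $\Rightarrow$ almost Dunford--Pettis $\Rightarrow$ almost $L$-set'' is indeed recorded in the paper. However, the decisive step of your argument---the transfer from $E^{**}$ back to $E$---rests on the claim that order continuity of the norm of $E^*$ makes $E$ order dense in $E^{**}$, so that $|\Phi_n|=\sup\{x\in E^+:x\le|\Phi_n|\}$. This claim is false. Take $E=C[0,1]$: then $E^*=M[0,1]$ is an AL-space and certainly has order continuous norm, yet $C[0,1]$ is \emph{not} order dense in its bidual. Indeed, the functional $\chi_{\{0\}}:\mu\mapsto\mu(\{0\})$ is a strictly positive element of $E^{**}$, but if $g\in C[0,1]$ satisfies $\hat g\le\chi_{\{0\}}$ then evaluating at each Dirac measure $\delta_t$ with $t\ne 0$ gives $g(t)\le 0$, so by continuity $g\le 0$. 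Hence there is no $y\in E^+$ with $0<y\le\chi_{\{0\}}$, and your construction of the disjoint sequence $(y_n)$ with the lower bound $|f_n(y_n)|>\varepsilon/2$ cannot be carried out in general.

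The paper avoids the bidual entirely. It factors the implication through $L$-weak compactness: first show (Lemma~\ref{Lemma 2.9}) that under the hypothesis every almost $L$-set $B\subset E^*$ is $L$-weakly compact, and then invoke Theorem~\ref{Theorem 2}(1) to conclude that $B$ is almost limited. The point is that Meyer-Nieberg's Proposition~3.6.3 characterizes $L$-weak compactness of subsets of $E^*$ via disjoint sequences in $E$, not in $E^{**}$; one assumes $\rho_B(x_n)\not\to 0$ for some norm bounded disjoint $(x_n)\subset E$, uses order continuity of $E^*$ (through \cite[Corollary~2.4.14]{M}) to see that the resulting $(y_n)\subset E$ with $|y_n|\le|x_n|$ is weakly null, and then the almost $L$-set property gives the contradiction. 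Your approach could be salvaged only under an additional hypothesis (for instance, that $E$ is an ideal in $E^{**}$, equivalently that $E$ itself has order continuous norm), but as stated the theorem covers spaces like $C(K)$ where that fails.
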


The proof of Theorem \ref{Theorem 2.8} is based on Theorem \ref{Theorem 2} and the following lemma.
\begin{lemma}\label{Lemma 2.9}
Every almost $L$\,-set in $E^*$ is $L$\,-weakly compact if and only if $E^*$ has order continuous norm.
\end{lemma}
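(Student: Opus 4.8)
The plan is to establish the two implications separately; the implication ``only if'' (from $L$-weak compactness of almost $L$-sets to order continuity of $E^*$) is the short one, and turns on the fact that \emph{every order interval of $E^*$ is an almost $L$-set}. To see this, fix $\varphi\in(E^*)^{+}$ and let $(x_n)$ be a disjoint, weakly null sequence in $E$. As recalled in the introduction, \cite[Theorem 13.3]{AB} gives $|x_n|\xrightarrow{w}0$, whence $\varphi(|x_n|)\to 0$; since $|f(x_n)|\le|f|(|x_n|)\le\varphi(|x_n|)$ for every $f\in[-\varphi,\varphi]$, we obtain $\sup_{f\in[-\varphi,\varphi]}|f(x_n)|\to 0$, so $[-\varphi,\varphi]$ is an almost $L$-set. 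By hypothesis $[-\varphi,\varphi]$ is then $L$-weakly compact, and, this order interval being solid, that amounts to saying that every disjoint sequence in $[-\varphi,\varphi]$ is norm null. Letting $\varphi$ run over $(E^*)^{+}$, we conclude that every order bounded disjoint sequence in $E^*$ converges to zero in norm, i.e. the norm of $E^*$ is order continuous (\cite[Theorem 2.4.2]{M}).

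For the converse, assume the norm of $E^*$ is order continuous and let $B$ be an almost $L$-set in $E^*$; we must show $B$ is $L$-weakly compact. By \cite[Proposition 3.6.2]{M} it suffices to prove that every disjoint sequence contained in the solid hull of $B$ is norm null. So let $(g_n)$ be such a sequence and choose $f_n\in B$ with $|g_n|\le|f_n|$; assume, towards a contradiction, that $\|g_n\|>\varepsilon$ for all $n$ and some $\varepsilon>0$. Since $\|g_n\|=\||g_n|\|=\sup\{|g_n|(x):x\ge 0,\ \|x\|\le 1\}$, pick $x_n\in E^{+}$ with $\|x_n\|\le 1$ and $|g_n|(x_n)>\varepsilon$, so that $|f_n|(x_n)>\varepsilon$ as well. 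The goal is to replace $(x_n)$ by a \emph{disjoint} bounded sequence $(w_n)$ in $E^{+}$ still satisfying $|f_n|(w_n)>\varepsilon'$ for some $\varepsilon'>0$; this is carried out by disjointifying, exploiting the disjointness of $(g_n)$ in $E^*$ together with the order continuity of $E^*$. (One may also argue through $E^{**}$: by Lemma \ref{Lemma 1} it suffices to treat positive, disjoint, $w^*$-null functionals on $E^*$, and then --- via Goldstine's theorem and the order continuity of $E^*$, together with Theorem \ref{Theorem 2}(2) applied to the Banach lattice $E^*$ --- it is enough to show that $B$ is even an almost limited set in $E^*$.) Granting the disjoint sequence $(w_n)$, the order continuity of $E^*$ forces every bounded disjoint positive sequence in $E$ to be weakly null (\cite{M}), so $w_n\xrightarrow{w}0$; choosing $z_n$ with $|z_n|\le w_n$ and $|f_n(z_n)|>\varepsilon'$, the sequence $(z_n)$ is disjoint and, since $z_n^{+},z_n^{-}\le w_n$, weakly null, while $\sup_{f\in B}|f(z_n)|\ge|f_n(z_n)|>\varepsilon'$ --- contradicting that $B$ is an almost $L$-set. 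Hence $B$ is $L$-weakly compact.

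The step I expect to be the main obstacle is precisely the disjointification indicated above: converting the disjoint sequence $(g_n)$ in $E^*$ (equivalently, a disjoint $w^*$-null sequence of functionals on $E^*$) into a genuinely disjoint sequence $(w_n)$ in the predual $E$ on which the functionals $|f_n|$ remain bounded below. This is where the order continuity of $E^*$ has to be brought to bear in an essential way; once it is in place, the remaining passage to the contradiction with the almost $L$-set property of $B$ is routine.
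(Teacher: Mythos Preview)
Your ``only if'' direction is correct; it is a mild variant of the paper's argument. You show that every order interval $[-\varphi,\varphi]\subset E^*$ is an almost $L$-set and hence $L$-weakly compact, concluding that order bounded disjoint sequences in $E^*$ are norm null. The paper instead applies the hypothesis to singletons $\{f\}\subset E^*$ and, via \cite[Proposition 3.6.3]{M}, obtains $|f|(|x_n|)\to 0$ for every norm bounded disjoint sequence $(x_n)\subset E$, whence such sequences are weakly null and \cite[Corollary 2.4.14]{M} gives order continuity of $E^*$. Both arguments are valid.

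The ``if'' direction, however, has a genuine gap --- the disjointification step you yourself flag as ``the main obstacle''. You begin from a disjoint sequence $(g_n)$ in the solid hull of $B\subset E^*$ and attempt to manufacture a disjoint sequence $(w_n)$ in $E^{+}$ on which $|f_n|$ stays bounded below. Disjointness of $(g_n)$ lives in $E^*$, and there is no mechanism in general to transfer it to the predual $E$ (which need not be Dedekind complete, so band projections are unavailable there). Your parenthetical route via $E^{**}$ runs into the same wall: showing that $B$ is almost limited in $E^*$ requires testing against disjoint $w^*$-null sequences in $E^{**}$, and Goldstine does not preserve disjointness.

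The paper sidesteps this obstacle entirely by using the \emph{predual} characterization of $L$-weak compactness for subsets of a dual, \cite[Proposition 3.6.3]{M}: a bounded set $L\subset E^*$ is $L$-weakly compact iff $\rho_L(x_n)=\sup_{f\in L}|f|(|x_n|)\to 0$ for every norm bounded disjoint sequence $(x_n)$ in $E$. Starting from such a disjoint $(x_n)\subset E$ and assuming $\rho_L(x_n)>\varepsilon$, one picks $f_n\in L$ and $y_n\in E$ with $|y_n|\le|x_n|$ and $|f_n(y_n)|>\varepsilon$; then $(y_n)$ is automatically disjoint in $E$, order continuity of $E^*$ makes it weakly null (\cite[Corollary 2.4.14]{M}), and the almost $L$-set property of $L$ yields the contradiction. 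The point is that by testing $L$-weak compactness from the $E$-side rather than from inside $E^*$, the disjoint sequence in $E$ is given to you for free.
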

\begin{proof}
 Assume that the norm of $E^*$ is order continuous. Let $L$ be an almost $L$\,-set in $E^*$. To establish  $L\,$-weak compactness of $L$, by Proposition 3.6.3 of \cite{M} we only have to show that for every norm bounded disjoint sequence $(x_n)$ of $E$,
      $$\rho_{L}(x_n)=\sup\{|f|(|x_{n}|):f\in L\}=\sup\{|f(y)|:|y|\leq|x_n|,\,f\in L\}\rightarrow0.$$ Assume by way of contradiction that $(\rho_{L}(x_{n}))$ does not converge to 0 for some norm bounded disjoint sequence $(x_n)$ of $E$. Then, by passing to a subsequence if necessary, we can suppose that there  exists some $\varepsilon>0$ such that $$\rho_{L}(x_n)=\sup\{|f(y)|:|y|\leq|x_n|,\,f\in L\}>\varepsilon$$for all $n$. For each $n$ choose some $f_{n}\in L$ and some $y_{n}\in E$ with $|y_n|\leq|x_n|$ satisfying $|f_{n}(y_{n})|>\varepsilon$. We can see that $(y_n)$ is  a norm bounded disjoint sequence. The order continuity of the norm of $E^*$ implies that $y_n\xrightarrow{w}0$ (\cite[Corollary 2.4.14]{M}). Since $L$ is an almost $L$\,-set in $E^*$, the disjoint weakly null sequence $(y_n)$ converges uniformly to 0 on $B$, which implies that $|f_{n}(y_{n})|\rightarrow0$. This contradicts with $|f_{n}(y_{n})|>\varepsilon$. Therefore, $L$ is $L$\,-weakly compact.
\par For the converse, assume that each almost $L$\,-set in $E^*$ is $L$\,-weakly compact. To prove that $E^*$ has order continuous norm, we need only to show that every disjoint sequence $(x_n)$ of $B_{E}$ is weakly null (\cite[Corollary 2.4.14]{M}). For this, let $f\in E^*$. Clearly, the singleton $\{f\}$ is an almost $L$\,-set in $E^*$, and  by hypothesis $\{f\}$ is $L\,$-weakly compact. In view of Proposition 3.6.3 of \cite{M}, we have $\rho_{f}(x_{n})=|f|(|x_n|)\rightarrow0$. Clearly, $|f(x_n)|\rightarrow0$, as desired.
\end{proof}

\section{The Weak Dunford-Pettis$^{\,*}$ Property of Banach Lattices}

Recall that a Banach space is said to have the DP$^*$ property if all relatively weakly compact sets are limited. Similarly, we introduce the so-called wDP$^*$ property of a Banach lattice.

\begin{definition}
A Banach lattice $E$ is called to have the \textit{weak Dunford-Pettis$^{\,*}$ property} (\textit{wDP$^{\,*}$ property} for short) if every relatively weakly compact set in $E$ is almost limited.
\end{definition}
\par In other words, $E$ has the wDP$^{*}$ property  if and only if for each weakly null sequence $(x_n)$ in $E$ and each disjoint $w^{*}$-null sequence in $E^{\,*}$, $f_{n}(x_{n})\rightarrow0$.
\par When the Banach lattice is $\sigma$-Dedekind complete, we can characterize the wDP$^*$ property in terms of disjoint sequences.
\begin{theorem}\label{Theorem 3}
For a $\sigma$-Dedekind complete Banach lattice $E$, the following statements are equivalent:
\begin{enumerate}
  \item $E$ has the  wDP$^{\,*}$ property.
  \item For each disjoint weakly null sequence $(x_n)\subset E$ and each disjoint $w^{*}$-null sequence $(f_n)\subset E^{\,*}$, we have $f_{n}(x_{n})\rightarrow0$.
  \item For each disjoint weakly null sequence $(x_n)\subset E^{+}$ and each disjoint $w^{*}$-null sequence $(f_n)\subset (E^{\,*})^+$, we have $f_{n}(x_{n})\rightarrow0$.
  \item The solid hull of every relatively weakly compact set in $E$ is almost limited.
  \end{enumerate}
\end{theorem}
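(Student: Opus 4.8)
The plan is to establish the cycle $(1)\Rightarrow(2)\Rightarrow(3)\Rightarrow(4)\Rightarrow(1)$; three of the four arrows are essentially formal, and the real content sits in $(3)\Rightarrow(4)$.

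For $(1)\Rightarrow(2)$: if $(x_n)$ is a disjoint weakly null sequence in $E$, then $A=\{x_n:n\in\mathbb{N}\}$ is relatively weakly compact (a weakly convergent sequence together with its limit is weakly compact), hence almost limited by (1); so for every disjoint $w^{*}$-null sequence $(f_n)$ of $E^{*}$ one has $|f_n(x_n)|\le\sup_{x\in A}|f_n(x)|\to0$. The implication $(2)\Rightarrow(3)$ is just the restriction to positive sequences, and $(4)\Rightarrow(1)$ is immediate since $A\subseteq sol(A)$ and every subset of an almost limited set is almost limited.

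The one substantive step is $(3)\Rightarrow(4)$, and even here the heavy lifting has been done in Theorem \ref{Theorem 1}. Let $A$ be a relatively weakly compact subset of $E$; then $sol(A)$ is a norm bounded solid subset of the $\sigma$-Dedekind complete lattice $E$, so Theorem \ref{Theorem 1} applies and it suffices to verify its condition (3) with $S=sol(A)$. So take a disjoint sequence $(x_n)$ in $sol(A)\cap E^{+}$ and a disjoint $w^{*}$-null sequence $(f_n)$ in $(E^{*})^{+}$. Every disjoint sequence contained in the solid hull of a relatively weakly compact set is weakly null by \cite[Theorem 13.3]{AB}, so $(x_n)$ is a disjoint weakly null sequence in $E^{+}$; hypothesis (3) then yields $f_n(x_n)\to0$, which is exactly what Theorem \ref{Theorem 1}(3) requires. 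Hence $sol(A)$ is almost limited, which is (4).

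I expect the main (really the only) obstacle to be recognizing that $(3)\Rightarrow(4)$ should be routed through Theorem \ref{Theorem 1} rather than attacked by hand: once $sol(A)$ is identified as a solid set, Theorem \ref{Theorem 1} reduces ``almost limited'' to a disjoint-sequence test, and \cite[Theorem 13.3]{AB} supplies weak nullity of the relevant disjoint sequences for free. It is worth flagging that $\sigma$-Dedekind completeness enters precisely so that Theorem \ref{Theorem 1} (and, through it, Lemma \ref{Lemma 1}, which lets one pass between $(f_n)$ and $(|f_n|)$, $(f_n^{+})$, $(f_n^{-})$) is available. As an alternative to the route above, one could first prove $(3)\Rightarrow(2)$ directly by splitting $x_n=x_n^{+}-x_n^{-}$ and $f_n=f_n^{+}-f_n^{-}$ (using \cite[Theorem 13.3]{AB} and Lemma \ref{Lemma 1} to see that all four pieces are disjoint and respectively weakly null and $w^{*}$-null) and then deduce (4) via Theorem \ref{Theorem 1}(2); the positive-sequence route, however, is cleaner.
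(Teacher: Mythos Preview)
Your proof is correct and follows essentially the same route as the paper: the paper also declares that only $(3)\Rightarrow(4)$ requires proof, and establishes it exactly as you do, by applying Theorem~\ref{Theorem 1} to the solid set $sol(W)$ and invoking \cite[Theorem 13.3]{AB} to see that any disjoint sequence in $sol(W)$ is weakly null. Your write-up simply makes the easy implications explicit and adds helpful commentary on where $\sigma$-Dedekind completeness enters.
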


\begin{proof}
Only $(3)\Rightarrow(4)$ needs a proof. To this end, let $W$ be a relatively weakly compact set in $E$. It should be noted that each disjoint sequence in the solid hull $sol(W)$ of $W$ converges weakly to 0 (see \cite[Theorem 13.3]{AB}). So, for every disjoint sequence $(x_n)$ in  $sol(W)\cap E^{\,+}$ and every disjoint $w^*$-null sequence $(f_n)$ of $(E^{\,*})^+$, by hypothesis we have $f_{n}(x_{n})\rightarrow0$. Hence, it follows from Theorem \ref{Theorem 1} that $sol(W)$ is almost limited.
\end{proof}

\par Since every Banach lattice  with order continuous norm is a Gelfand-Phillips space, $E$ has the Schur property if and only if $E$ has both order continuous norm and the DP$^*$ property. Let us recall that a Banach lattice $E$ has the positive Schur property if and only if every relatively weakly compact subset of $E$ is $L$-weakly compact (\cite[Corollary 3.6.8]{M}). Every Banach lattice with the positive Schur property is a KB-space (and hence has order continuous norm). Therefore, by Theorem \ref{Theorem 2} we have the following easy result and omit the proof.
\begin{proposition}\label{Proposition 1}
A Banach lattice $E$ has the positive Schur property if, and only if, $E$ has order continuous norm and the wDP$^{\,*}$ property.
\end{proposition}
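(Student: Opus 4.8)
The plan is to prove Proposition \ref{Proposition 1} by combining Theorem \ref{Theorem 2} with the cited characterization of the positive Schur property, namely that $E$ has the positive Schur property if and only if every relatively weakly compact subset of $E$ is $L$-weakly compact (\cite[Corollary 3.6.8]{M}). The two directions are then essentially bookkeeping.

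For the forward implication, suppose $E$ has the positive Schur property. Then $E$ is a $KB$-space, so in particular its norm is order continuous. Moreover, by \cite[Corollary 3.6.8]{M} every relatively weakly compact set in $E$ is $L$-weakly compact, and by Theorem \ref{Theorem 2}(1) every $L$-weakly compact set is almost limited; hence every relatively weakly compact set in $E$ is almost limited, i.e.\ $E$ has the wDP$^{\,*}$ property.

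For the converse, suppose $E$ has order continuous norm and the wDP$^{\,*}$ property. Let $W$ be a relatively weakly compact subset of $E$. By the wDP$^{\,*}$ property, $W$ is almost limited, and since the norm of $E$ is order continuous, Theorem \ref{Theorem 2}(2) tells us that $W$ is $L$-weakly compact. Thus every relatively weakly compact subset of $E$ is $L$-weakly compact, and by \cite[Corollary 3.6.8]{M} this means precisely that $E$ has the positive Schur property, completing the proof.

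There is essentially no hard step here: the whole argument is the observation that ``relatively weakly compact $\Rightarrow$ almost limited'' (the wDP$^{\,*}$ property) together with ``almost limited $\Rightarrow$ $L$-weakly compact'' (order continuity, via Theorem \ref{Theorem 2}(2)) is equivalent to ``relatively weakly compact $\Rightarrow$ $L$-weakly compact'' (the positive Schur property), once one checks that order continuity of the norm is automatic on the positive Schur side. The only point requiring any care is not to circularly invoke order continuity: on the forward direction it must be deduced from the positive Schur property (via the $KB$-space fact) rather than assumed, which is why the proof is stated as omitted in the excerpt — it is a direct concatenation of already-established results.
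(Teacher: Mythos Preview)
Your proposal is correct and follows exactly the approach the paper indicates: the paper explicitly omits the proof, noting only that it follows from Theorem \ref{Theorem 2} together with the cited characterization of the positive Schur property (\cite[Corollary 3.6.8]{M}) and the fact that the positive Schur property forces $E$ to be a KB-space. Your write-up simply spells out these steps.
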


\begin{remark}
(1) It is obvious that the (positive) Schur property and the DP$^*$ property imply the wDP$^*$ property.  However, $\ell^{\infty}\oplus L^{1}[0,\,1]$ is a Dedekind complete  Banach lattice with the wDP$^{*}$ property which has neither the (positive) Schur property nor the DP$^*$ property since $\ell^{\infty}$ has the DP$^*$ property without the positive Schur property, whereas $L^{1}[0,\,1]$ has the positive Schur property without the DP$^*$ property (a separable Banach space with the DP$^*$ property must have the Schur property).
\par(2) Clearly, every Banach lattice with the wDP$^*$ property has the wDP property, but the converse is not necessarily true. For instance, $c_{\,0}$ enjoys the Dunford-Pettis property (hence the wDP property), but by Proposition \ref{Proposition 1} $c_{\,0}$ does not have the wDP$^*$ property.
\par (3) It is known that if the norm dual $E^{*}$ of a Banach lattice $E$ has the (weak) Dunford-Pettis property, so does $E$ (cf. \cite[Theorem 19.5]{AB} and \cite[Proposition 2]{W4}). This does not necessarily hold for the wDP$^*$ property. For example, $\ell^{1}=(c_{0})^*$ has the Schur property, but $c_{0}$ does not have the wDP$^{*}$ property.
\end{remark}

In \cite{CGL} it was proved that a Banach space $X$ has the DP$^*$ property if and only if every operator from $X$ into $c_0$ is a Dunford-Pettis operator. On the other hand, Wnuk \cite{W4} characterized the positive Schur property of a Banach lattice: a Banach lattice $E$ has the positive Schur property if, and only if, $E$ has order continuous norm and each continuous operator  $T:E\rightarrow c_0$ is almost Dunford-Pettis. Comparing this with Proposition \ref{Proposition 1} in the present paper we naturally posed the following theorem.

\begin{theorem}\label{Theorem 2.2}
For a $\sigma$-Dedekind complete Banach lattice $E$, the following statements are equivalent.

\begin{enumerate}
  \item $E$ has the wDP$^{\,*}$ property.
  \item Every continuous operator $T:E\rightarrow c_{\,0}$ is an almost Dunford-Pettis operator.
  \item Every positive operator $T:E\rightarrow c_{\,0}$ is an almost Dunford-Pettis operator.
  \item Every positive operator $T:E\rightarrow c_{\,0}$ is a Dunford-Pettis operator.
\end{enumerate}
\end{theorem}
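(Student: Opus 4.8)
\emph{The engine of the proof is Theorem~\ref{Theorem 3}}: for $\sigma$-Dedekind complete $E$, the wDP$^{*}$ property says precisely that $f_{n}(x_{n})\to0$ whenever $(x_{n})$ is disjoint and weakly null in $E$ and $(f_{n})$ is disjoint and $w^{*}$-null in $E^{*}$. I would prove $(2)\Rightarrow(3)$ and $(4)\Rightarrow(3)$ — trivial, since a Dunford--Pettis operator is almost Dunford--Pettis — together with $(3)\Rightarrow(1)$, $(1)\Rightarrow(2)$ and $(1)\Rightarrow(4)$, which closes the loop. For $(3)\Rightarrow(1)$: if $E$ fails the wDP$^{*}$ property, Theorem~\ref{Theorem 3} supplies disjoint weakly null $(x_{n})\subset E^{+}$ and disjoint $w^{*}$-null $(f_{n})\subset(E^{*})^{+}$ with $f_{n}(x_{n})\not\to0$; since $(f_{n})$ is bounded and $w^{*}$-null, $Tx:=(f_{k}(x))_{k}$ is a well-defined \emph{positive} operator $E\to c_{0}$ with $\|Tx_{n}\|\ge f_{n}(x_{n})\not\to0$ along the disjoint weakly null sequence $(x_{n})$, so $T$ is not almost Dunford--Pettis, contradicting $(3)$.

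\emph{$(1)\Rightarrow(2)$ is the heart of the matter.} Write $Tx=(f_{k}(x))_{k}$ with $f_{k}=T^{*}e_{k}^{*}$, so $(f_{k})$ is bounded and $w^{*}$-null. If $T$ is not almost Dunford--Pettis, pass to a disjoint weakly null $(x_{n})\subset E$ with $\|Tx_{n}\|>\varepsilon$ and choose $k_{n}$ with $|f_{k_{n}}(x_{n})|>\varepsilon$; since $f_{k}(x_{n})\to0$ as $n\to\infty$ for each fixed $k$, we may also arrange $k_{1}<k_{2}<\cdots$. Now use $\sigma$-Dedekind completeness: the band generated by $x_{n}$ is a projection band with band projection $P_{n}$, and since $(x_{n})$ is disjoint the $P_{n}$, and hence the adjoints $P_{n}^{*}$ on $E^{*}$, are pairwise disjoint. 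Put $h_{n}:=P_{n}^{*}f_{k_{n}}$. Then the $h_{n}$ are pairwise disjoint in $E^{*}$ and $h_{n}(x_{n})=f_{k_{n}}(P_{n}x_{n})=f_{k_{n}}(x_{n})$, so $|h_{n}(x_{n})|>\varepsilon$. The one delicate point is that $(h_{n})$ is $w^{*}$-null, and here I would recycle the Burkinshaw--Dodds domination that powers Lemma~\ref{Lemma 1}: fixing $x\in E$ and $\delta>0$, the $w^{*}$-convergence of $(f_{k_{n}})$ yields $0\le f\in E^{*}$ with $(|f_{k_{n}}|-f)^{+}(|x|)<\delta$ for all $n$, whence
$$|h_{n}(x)|\le|f_{k_{n}}|(P_{n}|x|)\le(|f_{k_{n}}|-f)^{+}(|x|)+f(P_{n}|x|)<\delta+f(P_{n}|x|),$$
and $(P_{n}|x|)$, being disjoint and dominated by $|x|$, is weakly null, so $f(P_{n}|x|)\to0$ and $\limsup_{n}|h_{n}(x)|\le\delta$. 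Thus $(h_{n})$ is a disjoint $w^{*}$-null sequence with $h_{n}(x_{n})\not\to0$ against the disjoint weakly null $(x_{n})$, contradicting Theorem~\ref{Theorem 3} (valid by $(1)$); hence $\|Tx_{n}\|\to0$ and $T$ is almost Dunford--Pettis. The same argument (no change needed) also gives $(1)\Rightarrow(3)$.

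\emph{$(1)\Rightarrow(4)$.} By the previous step every positive $T:E\to c_{0}$ is almost Dunford--Pettis. To promote this to Dunford--Pettis I would use that order intervals of $c_{0}$ are norm compact, so any positive $T:E\to c_{0}$ satisfies $T[-u,u]\subset[-Tu,Tu]$ and is therefore automatically AM-compact; combining AM-compactness with the almost Dunford--Pettis property and the wDP$^{*}$ property — splitting a given weakly null sequence along a subsequence into an order-bounded part, handled via the compactness of order intervals of $c_{0}$ (through an order-continuous factorization of $T$, which exists because $T$ is order weakly compact), and a disjoint part, handled by the almost Dunford--Pettis property and Theorem~\ref{Theorem 3} — shows that $T$ is Dunford--Pettis.

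\emph{Expected main obstacle.} The substance lies in $(1)\Rightarrow(2)$, and specifically in checking that the disjointified functionals $h_{n}=P_{n}^{*}f_{k_{n}}$ stay $w^{*}$-null: the Burkinshaw--Dodds estimate above is exactly what makes this succeed, and without it the band-projection disjointification would be worthless. The secondary difficulty is the passage from "almost Dunford--Pettis" to "Dunford--Pettis" for positive operators into $c_{0}$ in $(1)\Rightarrow(4)$, where one must organise the order-bounded/disjoint decomposition of weakly null sequences carefully.
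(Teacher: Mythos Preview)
Your argument is correct and, for the cycle $(1)\Leftrightarrow(2)\Leftrightarrow(3)$, runs essentially parallel to the paper's. In $(3)\Rightarrow(1)$ both you and the paper build the positive operator $Tx=(f_{k}(x))_{k}$ from a given disjoint $w^{*}$-null $(f_{k})\subset(E^{*})^{+}$ and read off $f_{n}(x_{n})\to0$. In $(1)\Rightarrow(2)$ the only difference is the disjointification device: you manufacture disjoint functionals via the band projections $P_{n}$ onto the bands generated by the $x_{n}$ and set $h_{n}=P_{n}^{*}f_{k_{n}}$, then run the Burkinshaw--Dodds estimate by hand to get $h_{n}\xrightarrow{w^{*}}0$; the paper instead invokes \cite[Ex.~22, p.~73]{AB} to produce disjoint $g_{n}$ with $|g_{n}|\le|\pi_{k_{n}}\circ T|$ and $g_{n}(x_{n})=(\pi_{k_{n}}\circ T)(x_{n})$, and then quotes Lemma~\ref{Lemma 1} for the $w^{*}$-null conclusion. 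These are the same idea---restriction to the carrier of $x_{n}$---packaged differently; your version is more self-contained, the paper's more concise.

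The genuine divergence is in how $(4)$ is handled. The paper does not prove $(1)\Rightarrow(4)$ at all: it dispatches $(3)\Leftrightarrow(4)$ in one line by citing Wnuk's observation \cite[p.~230, Example~4]{W4} that a positive operator into a discrete Banach lattice with order continuous norm (in particular $c_{0}$) is almost Dunford--Pettis if and only if it is Dunford--Pettis. Your route---AM-compactness of positive $T:E\to c_{0}$ together with an order-bounded/disjoint splitting of weakly null sequences and an order-continuous factorisation---is aiming at a direct reproof of that same fact, but as written it is a sketch rather than a proof: you have not specified which decomposition you use or how the order-bounded piece is controlled. This is not a fatal gap (the target statement is true and provable along these lines), but the paper's citation is both shorter and avoids the bookkeeping you flag as a ``secondary difficulty''.
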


\begin{proof}
$(2)\Rightarrow(3)$ Obvious.
\par$(3)\Leftrightarrow(4)$  This is because a positive operator $T$ from a Banach lattice into a discrete Banach lattice with order continuous norm (e.g., $c_0,\,\ell^{p}\quad 1\leq p<\infty$) is almost Dunford-Pettis if and only if $T$ is Dunford-Pettis. See Example 4 of almost Dunford-Pettis operators on p.\,230 of \cite{W4}.
\par$(3)\Rightarrow(1)$  To prove that $E$ has the wDP$^{\,*}$ property, in view of Theorem \ref{Theorem 3} it is enough to show that for each disjoint weakly null sequence $(x_n)\subset E^{+}$ and each disjoint $w^{*}$-null sequence $(f_n)\subset (E^{\,*})^+$, we have $f_{n}(x_{n})\rightarrow0$. To this end, let $T:E\rightarrow c_0$ be defined by $T(x)=(f_{n}(x))$ for any $x\in E$. Clearly, the thus defined operator $T$ is a positive operator. By hypothesis, $T$ is an almost Dunford-Pettis operator. Therefore, $\|T(x_n)\|\rightarrow0$ as $n\rightarrow\infty$, and hence $f_{n}(x_{n})\rightarrow0$, as desired.
\par$(1)\Rightarrow(2)$ Let $T:E\rightarrow c_0$  be an arbitrary continuous linear operator. To finish the proof, we have to show that $\|T(x_n)\|\rightarrow0$ for every disjoint weakly null sequence $(x_n)$ of $E$. Assume by way of contradiction that $(\|T(x_n)\|)$ does not tend to 0 for some disjoint weakly null sequence $(x_n)$ of $E$. Then, by passing to a subsequence if necessary, we can suppose that there would exist some $\varepsilon>0$ such that $\|T(x_n)\|>\varepsilon$ for all $n\in\mathbb{ N}$.  For every $n\in\mathbb{N}$, there exists a canonical projection, say $\pi_{k_n}$, from $c_0$ into its coordinate space $\mathbb{R}$ such that $\|T(x_n)\|=|\pi_{k_n}(T(x_{n}))|$. Applying the idea used in the proof of Proposition 2.1 in \cite{CGL}, we can show that the sequence $(k_n)\subset\mathbb{N}$ can not be bounded. Again by passing to a subsequence if necessary, we can suppose that $(k_n)$ is strictly increasing. Then $(\pi_{k_n}\circ T)$ is a $w^{*}$-null sequence of $E^{\,*}$. Note that $(x_n)$ is a disjoint weakly null sequence of $E$. So, in view of \cite[\,Ex.\,22, p.73]{AB} there exists a disjoint sequence $(f_n)$ in $E^{\,*}$  such that $$|f_n|\leq|\pi_{k_n}\circ T|,\quad f_{n}(x_n)=(\pi_{k_n}\circ T)(x_n)=\pi_{k_n}(T(x_n)).$$Since $\pi_{k_n}\circ T\xrightarrow{w^*}0$, by Lemma \ref{Lemma 1} we have $f_{n}\xrightarrow{w^*}0$ in $E^{\,*}$. By hypothesis that $E$ has the wDP$^{*}$ property, it follows
from Theorem \ref{Theorem 3} that $f_n(x_n)\rightarrow0$ as $n\rightarrow\infty$. On the other hand, $$|\,f_n(x_n)|=|(\pi_{k_n}\circ T)(x_n)|=|\pi_{k_n}(T(x_n))|=\|T(x_n)\|>\varepsilon>0.$$This leads to a contradiction. Hence,  $\|T(x_n)\|\rightarrow0$ for every disjoint weakly null sequence $(x_n)$ of $E$, that is, $T$ is an almost Dunford-Pettis operator.
\end{proof}

\par Let $\mathcal{L}(E,\,F)$ denote the Banach space of all continuous linear operators between Banach lattices $E$ and $F$, and let $\mathcal{L}^{r}(E,\,F)$ denote the linear subspace of all regular operators, i.e., operators which can be written as the differences of two positive operators. It may be asked whether $\mathcal{L}(E,\,F)=\mathcal{L}^{r}(E,\,F)$  holds in Theorem \ref{Theorem 2.2}. An earlier  result due to Wnuk \cite{Wn} states that, for a $\sigma$-Dedekind complete Banach lattice $E$, $\mathcal{L}(E,\,c_0)=\mathcal{L}^{r}(E,\,c_0)$ if and only if $E$ is a discrete Banach lattice with order continuous norm. Therefore, even though $\ell^{\,1},\,\ell^{\infty}$ and $L^{1}[0,\,1]$ are the right spaces for Theorem \ref{Theorem 2.2}, we can see that
$\mathcal{L}(\ell^1,\,c_0)=\mathcal{L}^{r}(\ell^1,\,c_0)$,\, $\mathcal{L}(\ell^{\infty},\,c_0)\neq\mathcal{L}^{r}(\ell^{\infty},\,c_0)$, \, $\mathcal{L}(L^{1}[0,\,1],\,c_0)\neq\mathcal{L}^{r}(L^{1}[0,\,1],\,c_0)$.
\par It should also be noted that the wDP property is inherited by a closed ideal of a Banach lattice $E$ (\cite[Proposition 3]{W4}), whereas the wDP$^*$  is not. Consider $c_{0}$ as a closed ideal of $\ell^{\infty}$. However, by Theorem \ref{Theorem 2.2}  it is not surprising that $\sigma$-Dedekind complete complemented  sublattices of a $\sigma$-Dedekind complete Banach lattice with the wDP$^*$ property have this property too.
\section*{Acknowledgement}
The first author would like to thank deeply Professor W. Wnuk at A. Mickiewicz University of Poland for his help during the preparation of this paper.
\vskip 7mm

\end{document}